\newcommand*\patchAmsMathEnvironmentForLineno[1]{%
 \expandafter\let\csname old#1\expandafter\endcsname\csname #1\endcsname
 \expandafter\let\csname oldend#1\expandafter\endcsname\csname end#1\endcsname
 \renewenvironment{#1}%
    {\linenomath\csname old#1\endcsname}%
    {\csname oldend#1\endcsname\endlinenomath}}%
\newcommand*\patchBothAmsMathEnvironmentsForLineno[1]{%
 \patchAmsMathEnvironmentForLineno{#1}%
 \patchAmsMathEnvironmentForLineno{#1*}}%
\DeclareMathOperator{\afcn}{\dot{\chi}_\pi}
\title{\MakeUppercase{$2\times n$ Grids have Unbounded Anagram-Free Chromatic Number}\thanks{This research was partly funded by NSERC.}}
\author{Saman Bazarghani%
    \thanks{Department of Computer Science and Electrical Engineering, University of Ottawa}\qquad
    Paz Carmi%
    \thanks{Ben-Gurion University of the Negev}\qquad
    Vida Dujmović\footnotemark[2]\qquad
    Pat Morin%
    \thanks{School of Computer Science, Carleton University}}
\date{}
\DeclareMathOperator{\hist}{h}
\begin{document}

\maketitle

\begin{abstract}
    We show that anagram-free vertex colouring a $2\times n$ square grid requires a number of colours that increases with $n$.  This answers an open question in Wilson's thesis and shows that even graphs of pathwidth 2 do not have anagram-free colourings with a bounded number of colours.
\end{abstract}

%

\section{Introduction}

Two strings $s$ and $t$ are said to be \emph{anagrams} of each other if $s$ is a permutation of $t$.  A single string $s:=s_1,\ldots,s_{2r}$ is \emph{anagramish} if its first half $s_1,\ldots,s_r$ and its second half $s_{r+1},\ldots,s_{2r}$ are anagrams of each other.  A string is \emph{anagram-free} if it does not contain an anagramish substring.

In 1961, \citet{erdos:some} asked if there exists arbitrarily long strings over an alphabet of size $4$ that are anagram-free.\footnote{This was an incredibly prescient question since it is not at all obvious that there exist arbitrarily long anagram-free strings over any finite alphabet. The only justification for choosing the constant $4$ is that a short case analysis rules out the possibility of length-$8$ anagram-free strings over an alphabet of size $3$.}  In 1968 \citet{evdokimov:strongly,evdokimov:strongly2} showed the existence of arbitrarily long anagram-free strings over an alphabet of size $25$ and in 1971 \citet{pleasants:non-repetitive} showed an alphabet of size $5$ is sufficient.  Erd\H{o}s's question was not fully resolved until 1992, when \citet{keranen:abelian} answered it in the affirmative.

A path $v_1,\ldots,v_{2r}$ in a graph $G$ is \emph{anagramish} under a vertex $c$-colouring $\phi:V(G)\to\{1,\ldots,c\}$ if $\phi(v_1),\ldots,\phi(v_{2r})$ is an anagramish string.  The colouring $\phi$ is an \emph{anagram-free colouring} of $G$ if no path in $G$ is anagramish under $\phi$.  The minimum integer $c$ for which $G$ has an anagram-free vertex $c$-colouring is called the \emph{anagram-free chromatic number} of $G$ and is denoted by $\afcn(G)$.

For a graph family $\mathcal{G}$, $\afcn(\mathcal{G}):=\max\{\afcn(G):G\in\mathcal{G}\}$ or $\afcn(\mathcal{G}):=\infty$ if the maximum is undefined. The results on anagram-free strings discussed in the preceding paragraph can be interpreted in terms of $\afcn(\mathcal{P})$ where $\mathcal{P}$ is the family of all paths.  Slightly more complicated than paths are trees.  \citet{wilson.wood:anagram-free} showed that $\afcn(\mathcal{T})=\infty$ for the family $\mathcal{T}$ of trees and \citet{kamcev.luczak.ea:anagram-free} showed that $\afcn(\mathcal{T_2})=\infty$ even for the family $\mathcal{T}_2$ of binary trees.

One positive result in this context is that of
\citet{wilson.wood:anagram-free}, who showed that every tree $T$ of pathwidth $p$ has $\afcn(T)\le 4p+1$. Since trees are graphs of treewidth $1$ it is natural to ask if this result can be extended to show that every graph $G$ of treewidth $t$ and pathwidth $p$ has $\afcn(G)\le f(t,p)$ for some $f:\N^2\to\N$.  \citet{carmi.dujmovic.ea:anagram-free} showed that such a generalization is not possible for any $t\ge 3$ by giving examples of $n$-vertex graph of pathwidth $3$ (and treewidth $3$) with $\afcn(G)\in\Omega(\log n)$.  The obvious remaining gap left by these two works is graphs of treewidth $2$. Our main result is to show that $G_n$, the $2\times n$ square grid has $\afcn(G_n)\in\omega_n(1)$. Since $G_n$ has pathwidth 2, we have:

\begin{thm}\label{main_vertex}
    For every $c\in\N$, there exists a graph of pathwidth $2$ that has no anagram-free vertex $c$-colouring.
\end{thm}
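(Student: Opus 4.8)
The plan is to prove the sharper statement that $\afcn(G_n)\to\infty$, where $G_n$ is the $2\times n$ grid; Theorem~\ref{main_vertex} is then immediate, since $G_n$ has pathwidth $2$ and it suffices, for a given $c$, to take $n$ so large that $\afcn(G_n)>c$. So fix $c$ and suppose for contradiction that $\phi$ is an anagram-free $c$-colouring of $G_n$ with $n$ large. I encode $\phi$ by the sequence of columns $\tau_j=(a_j,b_j)$, $a_j=\phi(1,j)$, $b_j=\phi(2,j)$. Since a monochromatic edge is an anagramish path of length $2$, $\phi$ must be proper, so $a_j\ne b_j$, $a_j\ne a_{j+1}$ and $b_j\ne b_{j+1}$ for all $j$.

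The first step extracts rigid structure from a single flexible family of paths. For any column interval $[i,k]$ the ``fold'' $(1,i),(1,i+1),\ldots,(1,k),(2,k),(2,k-1),\ldots,(2,i)$ is a simple path of length $2(k-i+1)$, and its colour word $a_i,a_{i+1},\ldots,a_k,b_k,b_{k-1},\ldots,b_i$ is anagramish exactly when $\{a_i,\ldots,a_k\}=\{b_i,\ldots,b_k\}$ as multisets. Hence anagram-freeness forces $\{a_i,\ldots,a_k\}\ne\{b_i,\ldots,b_k\}$ for every interval; equivalently, the prefix-difference vectors $D_j:=\sum_{\ell\le j}(e_{a_\ell}-e_{b_\ell})\in\mathbb{Z}^c$ $(0\le j\le n)$ are pairwise distinct. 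A slightly larger repertoire of paths tightens this: a ``single-switch'' monotone path running along row $2$ on $[j-\ell,j]$, crossing at column $j$, and continuing along row $1$ on $[j,j+\ell]$ forces $\{b_{j-\ell},\ldots,b_j\}\ne\{a_j,\ldots,a_{j+\ell}\}$ (and its row-swapped mirror); and a ``comb'' path that walks along row $1$ while making short, pairwise-disjoint dips into row $2$ at a chosen set of columns contributes, for every pair of dips separated by equal column-gaps, a forbidden equality of connector multisets. Each such statement just says that the colour word read along some long path of $G_n$ contains no abelian square, so the whole argument is a rigidity argument: the huge supply of paths in $G_n$ must all read anagram-free strings over $[c]$.

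It remains to derive a contradiction for $n$ large, and this is the crux and the main obstacle. A natural attack combines pigeonhole with the comb paths: out of the $c(c-1)$ column types choose a type $(p,q)$ carried by $m\ge n/c^2$ columns, pass to every other one so that these ``$(p,q)$-columns'' are pairwise at distance $\ge 2$, and then try to find three of them in arithmetic progression with matching connector data, so that the associated comb path contains an abelian square. The difficulty is that each individual constraint is weak (it forbids only particular equalities), the positions of the $(p,q)$-columns are not under our control, and grid paths are essentially monotone in the column direction, so one cannot freely assemble an anagramish word; a single pigeonhole does not close the argument, because for a $3$-term progression of large common difference the connector multiset carries too much information to control by counting. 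I therefore expect the proof to proceed by induction — most plausibly an induction on the number of colours, showing that an anagram-free $c$-colouring of a sufficiently long grid yields an anagram-free $(c-1)$-colouring of a still-growing substructure — with the fold, single-switch, and comb paths supplying the step that peels off a colour. Carrying that reduction through against the rigidity of grid paths is where the real work lies.
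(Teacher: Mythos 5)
Your proposal is not a proof; it is a collection of preliminary observations followed by an explicit admission that you cannot close the argument (``Carrying that reduction through against the rigidity of grid paths is where the real work lies''), together with a speculation about an induction on the number of colours that does not match how the proof actually goes. The observations you do make — properness, the fold path forcing $\{a_i,\ldots,a_k\}\ne\{b_i,\ldots,b_k\}$ for every interval, the single-switch and comb paths — are sound as far as they go, and the fold path is essentially the paper's zig-zag path. But as you yourself note, each such constraint is weak and a pigeonhole on column types does not converge, so the substance of the theorem is entirely missing.

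The gap is conceptual. The paper does not peel off colours by induction. Instead it has three ingredients you are missing. First, a compactness/minimality argument (\cref{periodicity}): pass to a minimal sub-alphabet of block colourings for which anagram-free colourings of arbitrarily long grids exist, which forces the resulting colouring strings to be $\ell$-periodic for some fixed $\ell$. Second, the key combinatorial lemma (\cref{near_anagram_fourier}): every sufficiently long $\ell$-periodic string contains a long substring $s$ with $\tau(s)\le\epsilon|s|$, i.e.\ one that is $\epsilon$-close to anagramish; this is proved by a multiplicative-decay argument on a complete binary tree of substrings and has nothing to do with the geometry of the grid. Third — and this is the real trick — a block decomposition of $G_n$ into ``colourful'' blocks separated by identically-coloured ``boring'' $4$-blocks, together with a one-versus-two cancellation scheme: a block can be traversed by a top path, a bottom path, or a zig-zag covering both rows, and one zig-zag through a block $H_z$ contributes exactly what a top path through $H_x$ plus a bottom path through $H_y$ contribute when $H_x,H_y,H_z$ carry the same colouring. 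This converts the $\epsilon$-slack granted by \cref{near_anagram_fourier} into an exact cancellation, producing a genuinely anagramish path and hence a contradiction. Without the periodicity reduction and the one-versus-two routing flexibility, the ``rigidity'' you correctly identify as the obstacle cannot be overcome, and your comb/pigeonhole line of attack stalls exactly where you say it does.
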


\citet[Section 7.1]{wilson:anagram-free} conjectured that $\afcn(G_n)\in\omega_n(1)$, so this work confirms this conjecture.  Prior to the current work, it was not even known if the family of $n\times n$ square grids had anagram-free colourings using a constant number of colours.

In a larger context, this lower bound gives more evidence that, except for a few special cases (paths \cite{evdokimov:strongly,pleasants:non-repetitive,keranen:abelian}, trees of bounded pathwidth \cite{wilson.wood:anagram-free}, and highly subdivided graphs \cite{wilson.wood:anagram-free2}), the qualitative behaviour of anagram-free chromatic number is not much different than that of treedepth/centered colouring \cite{nesetril.ossona:tree-depth}.  Very roughly: For most graph classes, every graph in the class has an anagram-free colouring using a bounded number of colors precisely when every graph in the class has a colouring using a bounded number of colours in which every path contains a colour that appears only once in the path.

%
%


The remainder of this paper is organized as follows: \Cref{near_anagram_statement} gives some definitions and states a key lemma that shows that, under a certain periodicity condition, every sufficiently long string contains a substring that is $\epsilon$-close to being anagramish.
In \cref{vertex_colourings} we prove \cref{main_vertex}.  In \cref{near_anagram_proof} we prove the key lemma. \cref{reflections} concludes with some final remarks about the (non-)constructiveness of our proof technique.

\section{Periodicity in Strings}
\label{near_anagram_statement}

An \emph{alphabet} $\Sigma$ is a finite non-empty set.  A \emph{string over $\Sigma$} is a (possibly empty) sequence $s:=s_1,\ldots,s_n$ with $s_i\in\Sigma$ for each $i\in\{1,\ldots,n\}$ The \emph{length} of $s$ is the length, $n$, of the sequence. For an integer $k$, $\Sigma^k$ (the $k$-fold cartesian product of $\Sigma$ with itself) is the set of all length-$k$ strings over $\Sigma$.  The \emph{Kleene closure} $\Sigma^*:=\bigcup_{k=0}^\infty \Sigma^k$ is the set of all strings over $\Sigma$.  For each $1 \le i \le j\le n+1$, $s[i\mathbin{:}j]:=s_i,s_{i+1},\ldots,s_{j-1}$ is called a \emph{substring} of $s$. (Note the convention that $s[i\mathbin{:}j]$ does not include $s_j$, so $s[i\mathbin{:}j]$ has length $j-i$.)

Let $s:=s_1,\ldots,s_n$ be a string over an alphabet $\Sigma:=\{s_i:i\in\{1,\ldots,n\}\}$ and, for each $a\in \Sigma$, define $\hist_a(s):=|\{i\in\{1,\ldots,n\}:s_i=a\}|$.  The \emph{histogram} of $s$ is the integer-valued $|\Sigma|$-vector $\hist(s):=(\hist_a(s):a\in\Sigma)$ indexed by elements of $\Sigma$.  Observe that a string $s_1,\ldots,s_{2r}$ is anagramish if and only if $\hist(s_1,\ldots,s_r)=\hist(s_{r+1},\ldots,s_{2r})$ or, equivalently, $\hist(s_1,\ldots,s_r)-\hist(s_{r+1},\ldots,s_{2r})=\boldsymbol{0}$.
For each $a\in\Sigma$, let $\tau_a:=|\hist_a(s_1,\ldots,s_r)-\hist_a(s_{r+1},\ldots,s_{2r})|$.
Then $\tau(s):=\sum_{a\in\Sigma}\tau_a(s)$ is a useful measure of how far a string is from being anagramish and $\tau(s)=0$ if and only if $s$ is anagramish.

A string $s:=s_1,\ldots,s_n$ is $\ell$-periodic if each length-$\ell$ substring of $s$ contains every character in $\Sigma:=\bigcup_{i=1}^n s_i$.  We make use of the following lemma, which states that every sufficiently long $\ell$-periodic string contains a long substring that is $\epsilon$-close to being anagramish.

\begin{lem}\label{near_anagram_fourier}
    For each $r_0,\ell\in\N$ and each $\epsilon>0$, there exists a positive integer $n$ such that each $\ell$-periodic string $s_1,\ldots,s_n$ contains a substring $s:=s_{i+1},\ldots,s_{i+2r}$ of length $2r \ge 2r_0$ such that $\tau(s)\le \epsilon r$.
\end{lem}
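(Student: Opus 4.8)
The idea is to work with the partial-histogram function of the string and exploit $\ell$-periodicity to bound how fast it can change, then use a pigeonhole/averaging argument to find two positions where the partial histograms nearly agree in a way that forces a near-anagramish substring between them. Concretely, for an $\ell$-periodic string $s_1,\ldots,s_n$ over $\Sigma$ with $|\Sigma|=k\le\ell$, let $H(j):=\hist(s_1,\ldots,s_j)$ be the prefix histogram (an integer vector in $\mathbb{Z}^k$, with $H(0)=\boldsymbol{0}$). For indices $i<m<i+2r=:i+2r$ with $m=i+r$, the relevant quantity is $\tau(s_{i+1},\ldots,s_{i+2r})=\|(H(i+r)-H(i))-(H(i+2r)-H(i+r))\|_1 = \|H(i)+H(i+2r)-2H(i+r)\|_1$. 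So I want to find $i$ and $r$ with $r\ge r_0$ making this $\ell_1$ norm at most $\epsilon r$, i.e.\ I want the prefix-histogram curve $H$ to be "$\epsilon$-close to linear" on some long window: the midpoint value is close to the average of the endpoints.

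The first step is to pass to a one-dimensional, scalar surrogate. The vector $H(j)-\frac{j}{k}\mathbf{1}$ (subtracting the average drift) lives in the hyperplane orthogonal to $\mathbf{1}$; more usefully, for each character $a$ the scalar sequence $g_a(j):=\hist_a(s_1,\ldots,s_j)-\alpha_a j$ is a bounded "discrepancy" sequence, where $\alpha_a$ is chosen as the limiting frequency of $a$ (or, to avoid convergence issues, we can first chop the string into blocks and argue on a subsequence along which block-frequencies stabilize). The key consequence of $\ell$-periodicity is that $H$ moves in steps of size $1$ and visits all $k$ coordinates within any window of length $\ell$, so each $g_a$ changes by a bounded amount per step and, crucially, $g_a$ is \emph{not} monotone over long stretches — every window of length $\ell$ contributes at least one increment to coordinate $a$. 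This gives two-sided control: $g_a(j+\ell)-g_a(j)$ has a sign-change behaviour that prevents $g_a$ from having large slope in either direction for long.

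The second, main step is the extremal/averaging argument producing the near-linear window. Here is the cleanest route I'd try: normalize and view $g_a(j)/n$ as a function on $[0,1]$ that is uniformly Lipschitz (after scaling) and uniformly bounded independent of $n$ by $O(\ell)$ — this is where $\ell$-periodicity is doing real work, keeping the discrepancy bounded. A bounded-variation / convexity-defect argument then says: any bounded real sequence must have a long window on which it is $\epsilon$-close to its chord. Formally, suppose for contradiction that \emph{every} window of length $\ge 2r_0$ has $\|H(i)+H(i+2r)-2H(i+r)\|_1 > \epsilon r$; summing such "second-difference" lower bounds along a dyadic hierarchy of windows (or iterating a "split the interval where the midpoint deviates" recursion) forces one of the $g_a$ to be unbounded, or to have total variation growing super-linearly in $n$, contradicting the $O(\ell)$ bound once $n$ is large enough relative to $r_0,\ell,\epsilon$. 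Choosing $n$ large enough to run enough levels of this recursion yields the claimed $n=n(r_0,\ell,\epsilon)$.

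The main obstacle I anticipate is making the "bounded discrepancy forces a near-linear window" step fully rigorous and quantitative in the $\ell_1$/vector setting: one must (i) handle the drift rates $\alpha_a$ properly — they need not be rational or even well-defined for a single finite string, so some initial regularization (pigeonhole on block histograms to get an approximately-periodic-frequency substring) is needed; (ii) get the dependence of $n$ on $\epsilon$ and $r_0$ explicit enough that the recursion terminates; and (iii) ensure the window found has \emph{even} length with a genuine midpoint, which is a minor parity bookkeeping issue. I expect the authors instead phrase (ii) via a Fourier/Weyl-type estimate — the lemma's name \textbf{near\_anagram\_fourier} suggests expressing $\tau$ through exponential sums $\sum_j e^{2\pi i t\,\mathbf{1}[s_j=a]}$-style characters and bounding them using periodicity-induced cancellation — which would replace the elementary recursion above with a cleaner integral bound; either way, the conceptual content is the same: $\ell$-periodicity bounds the histogram discrepancy, and bounded discrepancy over a long string must be locally flat somewhere.
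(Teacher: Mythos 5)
Your reformulation of $\tau$ as a discrete second difference of the prefix-histogram vector, $\tau(s_{i+1},\ldots,s_{i+2r})=\|H(i)+H(i+2r)-2H(i+r)\|_1$, is correct and is a useful way to see what the lemma is asking for. The dyadic/recursive flavour of your plan is also in the right family: the paper's proof does use a complete binary tree over length-$r_0$ blocks. But the argument as you state it has a load-bearing gap.

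The gap is your claim that $\ell$-periodicity bounds the discrepancy $g_a(j)=\hist_a(s_1,\ldots,s_j)-\alpha_a j$ by $O(\ell)$ uniformly in $n$. This is false. $\ell$-periodicity says every window of length $\ell$ contains every character at least once, which gives only a one-sided bound: $\hist_a(s_1,\ldots,s_j)\ge\lfloor j/\ell\rfloor$, i.e.\ a lower bound of $1/\ell$ on each character's running frequency. It puts no upper bound on the frequency of $a$, and it certainly does not prevent the frequency profile from drifting slowly over the string. For instance, over $\Sigma=\{a,b\}$ the concatenation of a long prefix of $abab\cdots$ with a long suffix of $abbabbabb\cdots$ is $3$-periodic, yet the discrepancy of $a$ relative to its overall frequency grows linearly in $n$. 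Since your proposed contradiction (dyadic second-difference lower bounds forcing super-linear total variation of $g_a$) is premised on this false $O(\ell)$ bound, the argument does not close. Your suggested workaround of "pass to a subsequence where block-frequencies stabilize" is exactly where the real work would have to happen, and it is not a minor regularization.

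The paper instead exploits the lower bound $\hist_a(v)\ge|v|/\ell$, in the opposite direction from what you propose. It shows that if a node $v$ in the dyadic tree is $a^*$-unbalanced then one of its children has an $a^*$-count at most $(\tfrac12-\tfrac{\epsilon}{2\ell})\hist_{a^*}(v)$, i.e.\ unbalancedness forces a \emph{multiplicative decay} in $a^*$'s count along a chain of descendants. Iterating this decay along a carefully chosen chain makes the $a^*$-density fall below $1/\ell$, contradicting $\ell$-periodicity once the tree height $h$ is large enough. So the contradiction comes from the frequency \emph{floor} supplied by $\ell$-periodicity, not from a discrepancy \emph{ceiling}, which does not exist. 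Also, despite the lemma's label, the paper's proof is not Fourier-analytic; the authors explicitly leave a Fourier proof as a possible future improvement.
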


The proof of \cref{near_anagram_fourier} is deferred to \cref{near_anagram_proof}.  We now give some intuition as to how it is used.  The process of checking if a string is anagramish is often viewed as finding common terms in the first and second halves and crossing them both out.  If this results in a complete cancellation of all terms, then the string is an anagram.  \cref{near_anagram_fourier} tells us that we can always find a long substring $s$ where, after exhaustive cancellation, only an $\epsilon$-fraction of the original terms remain.  Informally, the substring $s$ is $\epsilon$-close to being anagramish.

\cref{near_anagram_fourier} says that, if up to $\epsilon r$ terms in each half of $s$ were each allowed to cancel two terms each in the other half of $s$, then it would be possible to complete the cancellation process.  To achieve this type of one-versus-two cancellation in our setting, we decompose our coloured pathwidth-$2$ graph into pieces of constant size.  The vertices in each piece can be covered with one path or partitioned into two paths.  In this way an occurrence $H_z$ of a particular coloured piece in one half can be matched with two like-coloured pieces $H_x$ and $H_y$ in the other half. We construct a single path $P$ that contains all vertices in $H_z$ and only half the vertices in each of $H_x$ and $H_y$.  In this way, the colours of vertices $P\cap H_z$ can cancel the colours of the vertices in $P\cap(H_x\cup H_y)$.

Since \cref{near_anagram_fourier} requires that the string $s$ be $\ell$-periodic, the following lemma will be helpful in obtaining strings that can be used with \cref{near_anagram_fourier}.

\begin{lem}\label{periodicity}
    Let $\Sigma$ be an alphabet and let $P:\Sigma^*\to\{0,1\}$ be a function such that,
    \begin{compactenum}[({A}1)]
        \item if $P(s')=0$ for some substring $s'$ of $s$ then $P(s)=0$; and
        \item for each $n\in\N$, there exists at least one $s\in \Sigma^n$ such that $P(s)=1$.
    \end{compactenum}
    Then there exists $\ell\in\N$ and $\Xi\subseteq\Sigma$ such that, for each $n\in\N$,
    \begin{compactenum}[(C1)]
        \item there exists $s\in\Xi^n$ such that $P(s)=1$; and
        \item every string in $\{s\in\Xi^n:P(s)=1\}$ is $\ell$-periodic.
    \end{compactenum}
\end{lem}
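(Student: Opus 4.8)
The plan is to build the alphabet $\Xi$ and the period $\ell$ by a compactness / iterated-pruning argument, starting from $\Sigma$ itself and repeatedly throwing away letters (or rather, throwing away ``bad'' configurations) until the surviving strings are forced to be periodic. First I would set up the right notion of a ``good'' string: for a subalphabet $\Xi\subseteq\Sigma$, call $\Xi$ \emph{productive} if for every $n\in\N$ there is some $s\in\Xi^n$ with $P(s)=1$. By (A2), $\Sigma$ itself is productive, so the collection of productive subalphabets is non-empty; pick $\Xi$ to be a productive subalphabet that is minimal under inclusion (this exists since $\Sigma$ is finite). Condition (C1) then holds for this $\Xi$ by definition, so the whole game is to prove (C2): every $P$-good string over $\Xi$ is $\ell$-periodic for a suitable $\ell$.

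The key observation is that minimality of $\Xi$ forces every letter of $\Xi$ to ``appear often'' in long good strings. Fix a letter $a\in\Xi$ and consider $\Xi\setminus\{a\}$. By minimality this smaller alphabet is not productive, so there is some threshold $N_a$ such that \emph{no} string in $(\Xi\setminus\{a\})^{N_a}$ satisfies $P$. Combined with the hereditary property (A1) — any substring of a good string is good — this says: in any $P$-good string over $\Xi$, every window of length $N_a$ must contain the letter $a$ (otherwise that window would be a good string over $\Xi\setminus\{a\}$ of length $\ge N_a$, contradicting (A1) once we also invoke that good strings of length $\ge N_a$ over $\Xi\setminus\{a\}$ don't exist — strictly, take the window to have length exactly $N_a$ and note a substring of a good string is good). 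Setting $\ell:=\max_{a\in\Xi}N_a$, every $P$-good string over $\Xi$ has the property that each length-$\ell$ window contains every letter of $\Xi$, i.e.\ it is $\ell$-periodic. That is exactly (C2), and since $\Xi\subseteq\Sigma$ and $\ell\in\N$ we are done.

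One small point to get right is the direction of the quantifiers in (A1): (A1) says $P(s')=0$ for a substring implies $P(s)=0$, equivalently (contrapositive) $P(s)=1$ implies $P(s')=1$ for every substring $s'$ of $s$. So good strings are hereditary downward, which is precisely what the argument in the previous paragraph uses when it restricts a long good string to a length-$\ell$ window. I should also make sure the ``not productive'' conclusion is stated correctly: $\Xi\setminus\{a\}$ not productive means \emph{there exists} $n$ with no good string of that length in $(\Xi\setminus\{a\})^n$; by (A1)/heredity, if some length $n_0$ has no good string over $\Xi\setminus\{a\}$, then no length $\ge n_0$ does either (a good long string would restrict to a good length-$n_0$ substring). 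So taking $N_a$ to be that $n_0$ works cleanly.

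The main obstacle — really the only subtle step — is arguing that a minimal productive subalphabet exists and drawing the right consequence from minimality. The existence is immediate from finiteness of $\Sigma$ plus (A2). The consequence — that removing any single remaining letter destroys productivity, hence yields a finite obstruction length $N_a$ — is where all the work is, and it hinges on reading (A1) in its contrapositive/hereditary form. Everything after that (taking $\ell=\max_a N_a$, checking $\ell$-periodicity letter by letter on each window) is routine. I would also remark that this argument is non-constructive: it gives no explicit bound on $\ell$ in terms of $|\Sigma|$, since the thresholds $N_a$ come from a purely existential statement, which is consistent with the paper's closing comments about non-constructiveness.
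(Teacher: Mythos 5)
Your proof is correct and takes essentially the same approach as the paper: choose a minimal ``productive'' subalphabet $\Xi$, use minimality to get, for each $a\in\Xi$, a finite threshold beyond which no $P$-good string over $\Xi\setminus\{a\}$ exists, and combine this with the hereditary property (A1) to conclude every length-$\ell$ window of a good string over $\Xi$ contains every letter. (If anything, your write-up is slightly more careful; the paper's statement of the key step has a small typo writing $\Xi^{\ell_a}$ where it means $(\Xi\setminus\{a\})^{\ell_a}$.)
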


\begin{proof}
    Take $\Xi$ to be a minimal subset of $\Sigma$ such that there exists $s\in\Xi^n$ with $P(s)=1$, for each $n\in\N$.  Such a $\Xi$ exists by (A2) and the fact that $\Sigma$ is finite. By definition, $\Xi$ satisfies (C1) so we need only show that it also satisifies (C2).  If $|\Xi|=1$ then we are done since every string over a $1$-character alphabet is $1$-periodic.

    For $|\Xi|>1$, the minimality of $\Xi$ implies that, for any $a\in\Xi$, there exist $\ell_a\in\N$ such that $P(s)=0$ for each $s\in\Xi^{\ell_a}$.  Therefore (A1) implies that, for each $\ell\ge\ell_a$, $P(s)=0$ for each $s\in\Xi^{\ell}$. Set $\ell:=\max\{\ell_a:a\in \Xi\}$.  Now (A1) implies that, for every $s\in\Xi$, every length $\ell$-substring of $s$ contains every character in $\Xi$, so $s$ is $\ell$-periodic.
\end{proof}

\section{Proof of \cref{main_vertex}}
\label{vertex_colourings}

For each $n\in\N$, let $G_n$ be the $2\times n$ square grid with top row $a_0,\ldots,a_{n-1}$ and bottom row $b_0,\ldots,b_{n-1}$ (see \cref{g_n}). For convenience, we let $G:=G_\infty$.  For each $i,j\in\N$ with $i\le j$, define $G[i\mathbin{:}j]:=G[\bigcup_{k=i}^{j-1}\{a_k,b_k\}]$ and we call $G[i\mathbin{:}j]$ a \emph{$(j-i)$-block}. Note that each $t$-block $G[i\mathbin{:}i+t]$ is isomorphic to $G_t$ with the mapping $f:G_t\to V(G[i\mathbin{:}i+t])$ given by $f_i(a_{j}):=a_{i+j}$ and $f_i(b_{j}):=b_{i+j}$ for each $j\in\{0,\ldots,t-1\}$.

\begin{figure}
    \centering{
        \includegraphics{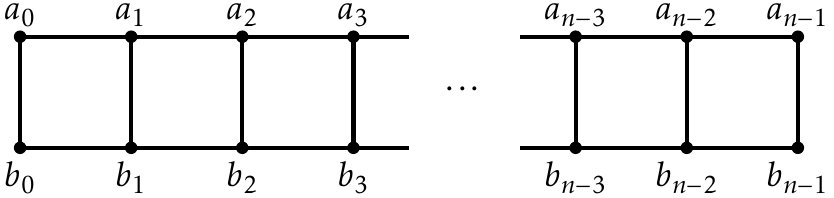}
    }
    \caption{The graph $G_n$}
    \label{g_n}
\end{figure}

For each $n\in\N$, let $\Phi_{c,n}$ be the set of all $c^{2n}$ functions $\phi:V(G_n)\to\{1,\ldots,c\}$, i.e., all vertex $c$-colourings of $G_n$.
Given some $\phi\in\Phi_{c,n}$, each $t$-block $G[i\mathbin{:}i+t]$ defines a vertex colouring $\phi[i\mathbin{:}i+t]\in \Phi_{c,t}$ of $G_t$ defined as $\phi[i\mathbin{:}i+t](v):=\phi(f_i(v))$ for each $v\in V(G[i\mathbin{:}i+t])$.

Our strategy will be to break $G_n$ up into small pieces using $4$-blocks that all have the same colouring. Observe that any string $s:=\phi_0,\ldots,\phi_{b-1}\in(\Phi_{c,4})^b$ defines a vertex $c$-colouring $\phi_s$ of the graph $G_{4b}$ where $\phi_s[4j\mathbin{:}4j+4]:=\phi_j$ for each $j\in\{0,\ldots,b-1\}$.  Indeed, this is a bijection between $c$-colourings of $G_{4b}$ and strings in $\Phi_{c,4}^b$.

\begin{lem}\label{breakers}
    If $\afcn(G_n)\le c$ for each $n\in\N$ then there exists $\Xi_{c,4}\subseteq\Phi_{c,4}$ such that, for each $b\in\N$ there exists $s\in(\Xi_{c,4})^b$ such that $\phi_s$ is an anagram-free vertex colouring of $G_{4b}$ and every such $s$ is $\ell$-periodic.
\end{lem}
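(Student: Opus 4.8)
The plan is to apply \cref{periodicity} with a carefully chosen function $P$. Take the alphabet to be $\Sigma:=\Phi_{c,4}$, the set of all $c$-colourings of $G_4$. For a string $s:=\phi_0,\ldots,\phi_{b-1}\in\Phi_{c,4}^b$, define $P(s):=1$ if $\phi_s$ is an anagram-free vertex colouring of $G_{4b}$, and $P(s):=0$ otherwise. The conclusion of \cref{periodicity} then hands us exactly the desired $\ell$ and $\Xi_{c,4}\subseteq\Phi_{c,4}$: property (C1) says that for each $b\in\N$ there is $s\in(\Xi_{c,4})^b$ with $\phi_s$ anagram-free, and property (C2) says every such $s$ is $\ell$-periodic. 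So the whole task reduces to verifying the two hypotheses (A1) and (A2) of \cref{periodicity} for this $P$.

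For (A1), I need: if some substring $s'$ of $s$ has $P(s')=0$, then $P(s)=0$. Suppose $s=\phi_0,\ldots,\phi_{b-1}$ and $s'=\phi_i,\ldots,\phi_{j-1}$ with $P(s')=0$, so $\phi_{s'}$ has an anagramish path $Q$ in $G_{4(j-i)}$. The point is that $G_{4(j-i)}$ embeds into $G_{4b}$ as the block $G[4i\mathbin{:}4j]$ via the isomorphism $f_{4i}$, and under this embedding the colouring $\phi_{s'}$ is precisely the restriction $\phi_s[4i\mathbin{:}4j]$ (this is immediate from the definitions of $\phi_s$ and of the induced block colouring, since both are built blockwise from the same $\phi_i,\ldots,\phi_{j-1}$). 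Thus the image of $Q$ is a path in $G_{4b}$ that is anagramish under $\phi_s$, so $\phi_s$ is not anagram-free and $P(s)=0$. The only mild subtlety is checking that $f_{4i}$ really does send the block colouring of $\phi_s$ to $\phi_{s'}$; this is a one-line unwinding of notation and should be stated but not belaboured.

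For (A2), I need: for each $b\in\N$, there exists $s\in\Phi_{c,4}^b$ with $P(s)=1$. Here I use the hypothesis of the lemma, namely $\afcn(G_n)\le c$ for all $n$. In particular $\afcn(G_{4b})\le c$, so $G_{4b}$ admits an anagram-free vertex $c$-colouring $\phi$. Since $s\mapsto\phi_s$ is a bijection between $\Phi_{c,4}^b$ and the $c$-colourings of $G_{4b}$ (as noted just before the lemma statement), there is a unique $s\in\Phi_{c,4}^b$ with $\phi_s=\phi$, and this $s$ has $P(s)=1$. With (A1) and (A2) established, \cref{periodicity} applies and produces $\ell$ and $\Xi_{c,4}$ with the stated properties, completing the proof.

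I do not anticipate a genuine obstacle here: the entire argument is a bookkeeping translation of the block structure of $G_n$ into the string setting, and then an invocation of \cref{periodicity}. If anything, the one place to be careful is the direction of implication in (A1) — the hypothesis is about a substring being \emph{bad} forcing the whole string to be bad (equivalently, anagram-freeness is inherited by sub-blocks), which is the natural monotonicity and matches (A1) as stated. One should also double-check the indexing convention that $s[i\mathbin{:}j]$ excludes $s_j$ so that substrings of length $j-i$ correspond to $(j-i)$-blocks, but that is purely notational.
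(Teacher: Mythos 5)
Your proof is correct and follows essentially the same route as the paper: define $P$ on $\Phi_{c,4}^*$ by whether $\phi_s$ is anagram-free, verify (A1) via the block-embedding of $\phi_{s'}$ into $\phi_s$, verify (A2) from the hypothesis $\afcn(G_{4b})\le c$ and the bijection with colourings, and invoke \cref{periodicity}. The only difference is that you spell out the (A1) embedding argument a bit more carefully than the paper does.
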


\begin{proof}
    For any $s\in(\Phi_{c,4})^*$, let $P(s):=1$ if $\phi_s$ is an anagram-free colouring of $G_{4|s|}$ and let $P(s):=0$ otherwise.  Then $P$ has property (A1) of \cref{periodicity} since any substring of $s'$ of $s$ defines a colouring of $G_{4|s'|}$ that appears in the colouring of $G_{4|s|}$; if the colouring $\phi_{s'}$ of $G_{4|s'|}$ is not anagram-free then neither is the colouring $\phi_s$ of $G_{4|s|}$. By assumption, $\afcn(G)\le c$, so $G_{4b}$ has some anagram-free vertex $c$-colouring, so $P$ also satisfies property (A2) of \cref{periodicity}.  The result now follows from  \cref{periodicity}.
\end{proof}

Let $\Xi_{c,4}$ and $\ell$ be defined as in \cref{breakers}. Let $\phi^*$ be an arbitrary element of $\Xi_{c,4}$ and let $\Sigma:=\{(k,\phi):k\in\{1,\ldots,\ell\},\, \phi\in\Phi_{c,4k}\}$. For a string $s:=(k_1,\phi_1),\ldots,(k_r,\phi_r)\in\Sigma^*$, define $n_{s,i}:=4(i+\sum_{j=1}^i k_j)$, define $n_s:=n_{s,r}$.  Fix a vertex $c$-colouring $\phi^*\in \Phi_{c,4}$ of $G_4$ and define the colouring $\phi_s$ of $G_{n_s}$ as follows:
\begin{compactenum}
    \item for each $i\in\{0,\ldots,r\}$, $\phi[n_{s,i},n_{s,i+4}]:=\phi^*$; and
    \item for each $i\in\{1,\ldots,r\}$, $\phi[n_{s,i-1}+4,n_{s,i}]:=\phi_i$.
\end{compactenum}
See \cref{bigexample}.  In words, $G_{n_s}$ is decomposed into blocks each of whose length is a multiple of $4$.  There are \emph{colourful} blocks of lengths $4k_1,4k_2,\ldots,4k_{|s|}\le 4\ell$ and these are interleaved with \emph{boring} blocks, each of length $4$.  The colourful blocks have their vertex colours determined by $s$. The boring blocks are all coloured the same way, by $\phi^*$.

\begin{figure}
    \centering{\includegraphics{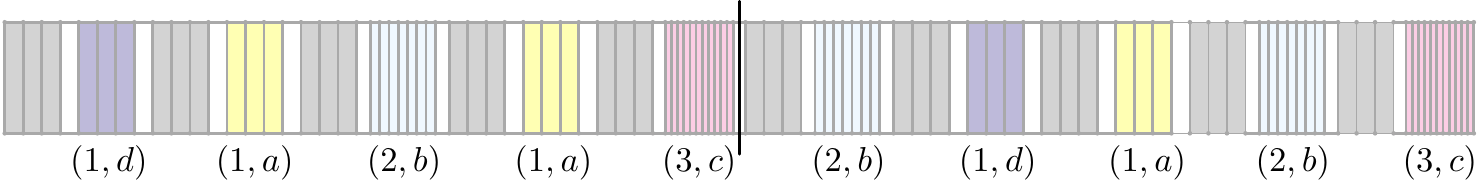}}
    \caption{A string $s\in\Sigma^{10}$ and its corresponding coloured graph $G_{n_s}$.}
    \label{bigexample}
\end{figure}


Define the function $P:\Sigma^*\to\{0,1\}$ so that $P(s):=1$ if $\phi_s$ is an anagram-free colouring of $G_{n_s}$ and $P(s):=0$ otherwise.  Observe that any substring $s'$ of $s$ defines a colouring $\phi_{s'}$ of $G_{n_{s'}}$ that appears in the colouring $\phi_s$ of $G_{n_s}$. Therefore $P$ satisfies property (A1) of \cref{periodicity}.  Furthermore, if $\afcn(G_n)\le c$ for each $n\in\N$, then \cref{breakers} implies that there exists a string $s\in\Sigma^{b}$ with $P(s)=1$ for each $b\in\N$.  Therefore $P$ satisfies property (A2) of \cref{periodicity}.  Therefore, \cref{periodicity} implies the following result:

\begin{lem}\label{block_colouring}
    If $\afcn(G_n)\le c$ for each $n\in\N$ then there exists $\ell\in\N$ such that, for each $k\in\N$ there exists an $\ell$-periodic string $s\in\Sigma^{k}$ such that $\phi_s$ is an anagram-free vertex colouring of $G_{n_s}$.
\end{lem}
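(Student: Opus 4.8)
The plan is to confirm that the function $P:\Sigma^*\to\{0,1\}$ defined just above satisfies hypotheses (A1) and (A2) of \cref{periodicity}, and then to extract the claim from its conclusion. Property (A1) is essentially bookkeeping: if $s'$ is a substring of $s\in\Sigma^*$, then the boring/colourful block decomposition of $G_{n_{s'}}$ determined by $s'$ is a contiguous stretch of the decomposition of $G_{n_s}$ determined by $s$, so there are indices $i\le j$ with $\phi_{s'}\cong\phi_s$ restricted to $G[i\mathbin{:}j]$. Every path of $G[i\mathbin{:}j]$ is a path of $G_{n_s}$, so any path that is anagramish under $\phi_{s'}$ is anagramish under $\phi_s$; hence $P(s')=0$ implies $P(s)=0$.

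The work is in (A2): assuming $\afcn(G_n)\le c$ for all $n$, I must produce, for each $b\in\N$, some $s\in\Sigma^b$ with $P(s)=1$. Here I would invoke \cref{breakers}, which (for the same $\ell$ used in the definition of $\Sigma$) gives, for arbitrarily large $b'$, a string $t\in(\Xi_{c,4})^{b'}$ with $\phi_t$ an anagram-free colouring of $G_{4b'}$, every such $t$ being $\ell$-periodic. For $b'$ large, any such $t$ must use the letter $\phi^*\in\Xi_{c,4}$, and $\ell$-periodicity then forces $\phi^*$ to occur among every $\ell$ consecutive positions of $t$. I would greedily pick positions $p_0<p_1<\dots<p_b$ carrying the letter $\phi^*$, taking each $p_i$ to be the first occurrence of $\phi^*$ at position $\ge p_{i-1}+2$; by $\ell$-periodicity $p_i\le p_{i-1}+\ell+1$, so $k_i:=p_i-p_{i-1}-1\in\{1,\dots,\ell\}$, and for $b'$ large enough the process runs for all $b$ steps. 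Letting $\phi_i\in\Phi_{c,4k_i}$ be the restriction of $\phi_t$ to the $4$-blocks strictly between the $p_{i-1}$-th and $p_i$-th $4$-blocks, the string $s:=(k_1,\phi_1),\dots,(k_b,\phi_b)\in\Sigma^b$ yields a colouring $\phi_s$ isomorphic to the restriction of $\phi_t$ to the $4$-blocks at positions $p_0$ through $p_b$ (the $4$-blocks at $p_0,\dots,p_b$ carry $\phi^*$, matching the boring blocks of $\phi_s$, and the intervening $4$-blocks supply the colourful blocks). Being an induced sub-colouring of the anagram-free colouring $\phi_t$, $\phi_s$ is anagram-free, so $P(s)=1$.

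With (A1) and (A2) in hand, \cref{periodicity} supplies an integer (which I rename $\ell$, a priori unrelated to the earlier one) and a set $\Xi\subseteq\Sigma$ such that, for every $k\in\N$, some $s\in\Xi^k$ has $P(s)=1$ and every such $s$ is $\ell$-periodic. Fixing $k$ and choosing such an $s$, we have $s\in\Xi^k\subseteq\Sigma^k$, $s$ is $\ell$-periodic, and $P(s)=1$ unwinds to the statement that $\phi_s$ is an anagram-free colouring of $G_{n_s}$, which is exactly the conclusion. I expect the only genuine obstacle to be the gap-control step inside (A2): one must ensure the chosen occurrences of $\phi^*$ can always be spaced so that each induced gap $k_i$ lies in $\{1,\dots,\ell\}$, so that $(k_i,\phi_i)$ is a legitimate letter of $\Sigma$; everything else is routine manipulation of the block decomposition followed by a direct application of \cref{periodicity}.
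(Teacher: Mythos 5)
Your proof is correct and takes the same route as the paper: check that the predicate $P$ satisfies (A1) and (A2) and then apply \cref{periodicity}, noting (as you do) that the resulting period is a new integer unrelated to the $\ell$ used to define $\Sigma$. The paper's own treatment of (A2) is a one-line assertion that it follows from \cref{breakers}; your cut-at-$\phi^*$ argument --- using $\ell$-periodicity of the $\Xi_{c,4}$-string to force an occurrence of $\phi^*$ in every window of $\ell$ consecutive positions, and then reading off the gaps $k_i\in\{1,\dots,\ell\}$ and intervening colourings $\phi_i$ to build an element of $\Sigma^b$ whose induced colouring is a restriction of the anagram-free $\phi_t$ --- is exactly the translation that the paper leaves implicit.
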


\Cref{near_anagram_fourier,block_colouring} immediately imply:

\begin{lem}\label{near_anagram_graph}
    If $\afcn(G_n)\le c$ for each $n\in\N$ then, for each $r_0\in\N$ and $\epsilon>0$, there exists $r\ge r_0$ and a string $s:=s_1,\ldots,s_{2r}\in\Sigma^{2r}$ with $\tau(s)\le\epsilon r$ such that $\phi_s$ is an anagram-free vertex colouring of $G_{n_s}$.
\end{lem}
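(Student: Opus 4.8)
The plan is to chain \cref{block_colouring,near_anagram_fourier} directly; the present lemma is essentially just their composition. Assume $\afcn(G_n)\le c$ for every $n\in\N$. First I would invoke \cref{block_colouring} to obtain the period $\ell\in\N$ it guarantees. This $\ell$ must be fixed \emph{before} anything else, since the length threshold produced by \cref{near_anagram_fourier} depends on it.

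Now fix $r_0\in\N$ and $\epsilon>0$. Feeding $r_0$, $\ell$, and $\epsilon$ into \cref{near_anagram_fourier} produces an integer $n$ such that every $\ell$-periodic string of length $n$ contains a substring of length $2r\ge 2r_0$ that is $\epsilon$-close to anagramish in the sense $\tau\le\epsilon r$. Next I would invoke \cref{block_colouring} a second time, this time with the parameter $k:=n$, to obtain an $\ell$-periodic string $s':=s'_1,\ldots,s'_n\in\Sigma^n$ for which $\phi_{s'}$ is an anagram-free vertex colouring of $G_{n_{s'}}$. Since $s'$ is $\ell$-periodic, \cref{near_anagram_fourier} applies to it and yields an index $i$ and an $r\ge r_0$ such that the substring $s:=s'_{i+1},\ldots,s'_{i+2r}\in\Sigma^{2r}$ satisfies $\tau(s)\le\epsilon r$.

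It then remains only to verify that $\phi_s$ is anagram-free, and this is exactly the observation already recorded before the statement of \cref{block_colouring}: the colouring $\phi_s$ of $G_{n_s}$ appears as the colouring of an induced subgraph (a block) of $G_{n_{s'}}$ under $\phi_{s'}$. Hence any path in $G_{n_s}$ that is anagramish under $\phi_s$ is also a path in $G_{n_{s'}}$ that is anagramish under $\phi_{s'}$, contradicting that $\phi_{s'}$ is anagram-free. Therefore $\phi_s$ is an anagram-free vertex colouring of $G_{n_s}$, and $s$ has all the claimed properties.

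I do not expect any genuine obstacle in this argument: all the real work sits inside \cref{near_anagram_fourier} (the Fourier-analytic near-anagram statement, proved later) and \cref{block_colouring} (the periodicity extraction). The only thing to be careful about is the order of quantifiers — obtaining $\ell$ from \cref{block_colouring} first, then $n$ from \cref{near_anagram_fourier}, then re-applying \cref{block_colouring} at length $k=n$ — and noting, as above, that anagram-freeness is inherited by the colouring associated to a substring.
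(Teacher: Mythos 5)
Your proof is correct and takes essentially the same route as the paper, which simply composes \cref{block_colouring} with \cref{near_anagram_fourier}. You are slightly more explicit than the paper about the order of quantifiers (fixing $\ell$ before invoking \cref{near_anagram_fourier}, then re-invoking \cref{block_colouring} at $k=n$) and about the inheritance of anagram-freeness by the substring's colouring, both of which the paper leaves implicit.
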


\begin{proof}
    By \cref{block_colouring}, for each $k\in\N$ there exists an $\ell$-periodic string $s'\in\Sigma^k$ such that $\phi_{s'}$ is an anagram-free vertex colouring of $G_{n_{s'}}$. Applying \cref{near_anagram_fourier} to $s'$ proves the existence of the desired string $s$.
\end{proof}

\cref{near_anagram_graph} shows the existence of colourings of $G_n$ for arbitrarily large values of $n$ that are defined by strings that are $\epsilon$-close to being anagramish.  The last step, done in the next lemma, is to show that there is enough flexibility when constructing a path in $G_n$ that we can find a path that has an anagramish colour sequence.

\begin{lem}\label{anagramish_path}
    For any $\ell\in\N$, there exists $\epsilon>0$ and $r_0\in\N$ such that, for any integer $r\ge r_0$ and any $\ell$-periodic $s:=s_1,\ldots,s_{2r}\in\Sigma^{2r}$ with $\tau(s)\le\epsilon r$, the graph $G_{n_s}$ contains a path $P=v_1,\ldots,v_{2m}$ such that $\phi_s(v_1),\ldots,\phi_s(v_{2m})$ is anagramish.
\end{lem}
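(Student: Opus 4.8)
The plan is to split $G_{n_s}$ into its \emph{colourful blocks} $C_1,\dots,C_{2r}$ and its \emph{boring blocks} $B_0,\dots,B_{2r}$: writing $s_i=(k_i,\phi_i)$, the block $C_i$ is the $4k_i$-block coloured by $\phi_i$ (a $2\times 4k_i$ grid) and each $B_j$ is a $2\times4$ grid coloured by $\phi^*$, appearing in the order $B_0,C_1,B_1,C_2,\dots,C_{2r},B_{2r}$ along the grid. The path $P$ will cross $G_{n_s}$ from left to right, entering and leaving each block through its extreme columns, and inside a colourful block $C_i$ it will either \emph{fully cover} $C_i$ by a Hamiltonian path from the top-left to the top-right vertex of $C_i$ (which exists since $4k_i$ is even) or \emph{half-cover} $C_i$ by its top row or by its bottom row; the boring blocks will be traversed in one of a few fixed ways. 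We will arrange that the colour histogram $P$ picks up from $B_0,C_1,\dots,B_{r-1},C_r$ equals the one it picks up from $B_r,C_{r+1},\dots,B_{2r-1},C_{2r}$ (with $B_{2r}$ unused). Since equal histograms have equal size, these two stretches are then the two halves of $P$, so $\phi_s(v_1),\dots,\phi_s(v_{2m})$ is anagramish, which is what we want.

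First I would set $\epsilon:=1/(2\ell)$ and $r_0:=2\ell$ (any smaller $\epsilon$ works too, and is convenient below) and extract the arithmetic consequence of $\ell$-periodicity. For a symbol $\sigma$ occurring in $s$ let $p_\sigma$ and $q_\sigma$ be the numbers of its occurrences in $s_1,\dots,s_r$ and in $s_{r+1},\dots,s_{2r}$. Cutting each half of $s$ into $\lfloor r/\ell\rfloor$ consecutive length-$\ell$ windows, each containing $\sigma$, gives $p_\sigma,q_\sigma\ge\lfloor r/\ell\rfloor\ge r/(2\ell)$ for $r\ge r_0$, whereas $|p_\sigma-q_\sigma|\le\sum_{\sigma'}|p_{\sigma'}-q_{\sigma'}|=\tau(s)\le\epsilon r=r/(2\ell)$. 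Hence $|p_\sigma-q_\sigma|\le\min\{p_\sigma,q_\sigma\}$ — equivalently $\max\{p_\sigma,q_\sigma\}\le2\min\{p_\sigma,q_\sigma\}$ — for every $\sigma$, and since $\sum_\sigma(p_\sigma-q_\sigma)=0$ the quantity $\tau(s)=2\sum_{\sigma:\,p_\sigma>q_\sigma}(p_\sigma-q_\sigma)$ is even.

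The second step is the histogram balancing underlying the ``one-versus-two'' cancellation described in \cref{near_anagram_statement}. For $\sigma=(k,\phi)$ write $H_\sigma$, $T_\sigma$, $\bar T_\sigma$ for the colour histograms of all of $\phi$, of its top row, and of its bottom row, so $H_\sigma=T_\sigma+\bar T_\sigma$; fully covering a $\sigma$-block contributes $H_\sigma$ and $8k$ vertices, half-covering it by the top (bottom) row contributes $T_\sigma$ ($\bar T_\sigma$) and $4k$ vertices. Fix $\sigma$ with $p_\sigma\ge q_\sigma$ (otherwise swap the two halves). In the first half, half-cover $q_\sigma$ of the $\sigma$-blocks by their top rows and the other $p_\sigma-q_\sigma$ by their bottom rows; in the second half, fully cover $p_\sigma-q_\sigma$ of the $\sigma$-blocks and half-cover the remaining $2q_\sigma-p_\sigma\ge0$ by their top rows. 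Then these blocks give the first half $q_\sigma T_\sigma+(p_\sigma-q_\sigma)\bar T_\sigma$ and the second half $(p_\sigma-q_\sigma)(T_\sigma+\bar T_\sigma)+(2q_\sigma-p_\sigma)T_\sigma$, which is the same vector, and a direct count shows each half gets exactly $4k\,p_\sigma$ vertices from them. Summing over all symbols, $C_1,\dots,C_r$ and $C_{r+1},\dots,C_{2r}$ contribute equal colour histograms and equal vertex counts to $P$.

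The remaining step — and the one I expect to be the real obstacle — is to glue all these local choices into one connected path. Top-row half-covers and full covers splice together along a path that stays on the top row between blocks, but a \emph{bottom}-row half-cover forces $P$ onto the bottom row inside that block, so $P$ has to switch rows in the boring blocks immediately before and after it. I would pick which $\sigma$-blocks get bottom-covered so that they are pairwise non-adjacent and avoid positions $r$ and $2r$ — there is plenty of room, since only $\tau(s)/2\le r/(4\ell)$ blocks per half are bottom-covered while each symbol appears $\ge r/(2\ell)$ times per half (take $\epsilon$ a smaller multiple of $1/\ell$ if a crude greedy bound is wanted). Then every boring block has one of three fixed types — plain, switch-down, switch-up — with $\sum_{\sigma:\,p_\sigma>q_\sigma}(p_\sigma-q_\sigma)=\tau(s)/2$ switch-downs and as many switch-ups in the first half, and $\sum_{\sigma:\,q_\sigma>p_\sigma}(q_\sigma-p_\sigma)=\tau(s)/2$ of each in the second half, so every type is split evenly between the two halves and the boring blocks too contribute equal histograms and vertex counts to each half. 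Combined with the previous paragraph this makes the two halves of $P$ equal in length and histogram, so the midpoint falls at the $C_r$--$B_r$ boundary and $\phi_s$ restricted to $P$ is anagramish. What is left is the elementary but bookkeeping-heavy checking that the prescribed covers of consecutive blocks actually meet (exit and entry columns and rows matching up at each boundary) and that the parity conditions hold — they do, using that $\tau(s)$ is even and that no switching boring block crosses the midpoint or runs off the right end — and this verification is where most of the full write-up would go.
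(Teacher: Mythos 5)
Your proposal follows essentially the same approach as the paper: decompose $G_{n_s}$ into colourful and boring blocks, balance each symbol's histogram by trading a top-covered plus a bottom-covered block in one half against a fully-covered block in the other, keep the bottom-covered blocks non-adjacent (using $\ell$-periodicity and the $\tau(s)\le\epsilon r$ bound to guarantee room), and route the boring blocks with plain/switch-up/switch-down pieces that balance between the halves. The per-symbol counts ($q_\sigma$ top, $p_\sigma-q_\sigma$ bottom in one half; $p_\sigma-q_\sigma$ full, $2q_\sigma-p_\sigma$ top in the other) match the paper's $A_a/B_a$ bookkeeping exactly, so this is the same proof.
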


\begin{proof}
    For each $a\in\Sigma$ define $\delta_a := \hist_a(s_1,\ldots,s_r)-\hist_a(s_{r+1},\ldots,s_{r2})$ and define sets $A_a\subseteq\{i\in \{1,\ldots,r-1\}: s_i=a\}$ and $B_a\subseteq\{i\in\{r+1,\ldots,2r-1\}:s_i=a\}$ as follows:
    \begin{compactenum}
        \item If $\delta_a=0$ then $A_a=B_a=\emptyset$.
        \item If $\delta_a>0$ then $|A_a|=2\delta_a=2|B_a|$.
        \item If $\delta_a<0$ then $|A_a|=\delta_a=\tfrac{1}{2}|B_a|$.
    \end{compactenum}
    Let $A:=\bigcup_{a\in\Sigma} A_a$ and $B:=\bigcup_{a\in\Sigma} B_a$.
    The sets $A_a$ and $B_a$ are chosen so that they satisfy the following \emph{global independence constraint}:  There is no pair $i,j\in A\cup B$ such that $i-j=1$.  To see that this is possible, first observe that, because $r$ is not in $A$ or $B$ we need only concern ourselves with pairs where both $i,j\in\{1,\ldots,r-1\}$ or pairs where both $i,j\in\{r+1,\ldots,2r-1\}$.  Thus, we can choose the elements of $A_a$, for each $a\in\Sigma$ and then independently choose the elements of $B_a$, for each $a\in\Sigma$.

    We show how to choose the elements of $A_a$ for each $a\in\Sigma$.  The same method works for choosing the elements in $B_a$. Observe that, because $s$ is $\ell$-periodic, $|\{i\in \{1,\ldots,r-1\}:s_i=a\}|\ge \lfloor(r-1)/\ell\rfloor$ for each $a\in\Sigma$.  This allows us to greedily choose the elements in $A_a$ for each $a\in\Sigma$. At each step we simply avoid choosing $i$ if $i-1$ or $i+1$ have already been chosen in some previous step.  At any step in the process, at most $\epsilon r$ elements have already been chosen in previous steps and each of these eliminates at most $2$ options.  Therefore, there will always be an element available to choose, provided that $2\epsilon r < \lfloor(r-1)/\ell\rfloor$.  In particular, for any $r\ge r_0\ge 2\ell$, $\epsilon < 1/(4\ell)$ works.

    We now construct the path $P$ in a piecewise fashion.  Refer to \cref{path_construction}.  For each $i\in\{1,\ldots,2r\}$, let $H_i:=G[n_{s,i}:n_{s,i+1}-4]$. The subgraph $H_1,\ldots,H_{2r}$ are what is referred to above as colourful blocks.  The colouring of $V(H_i)$ by $\phi_s$ is defined by $s_i$.
    \begin{figure}
        \centering{
            \includegraphics{figs/bigexample-1} \\
            \includegraphics{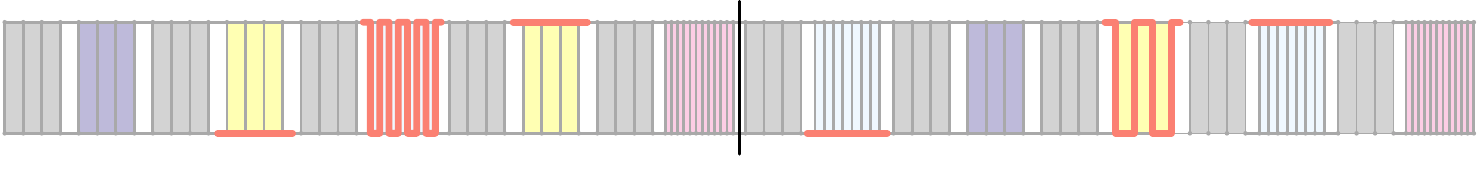} \\
            \includegraphics{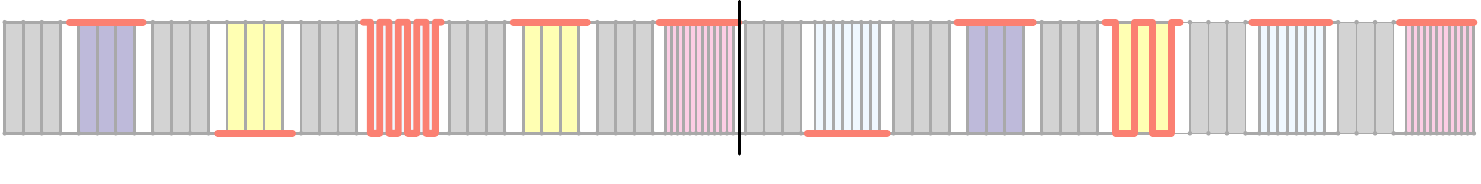} \\
            \includegraphics{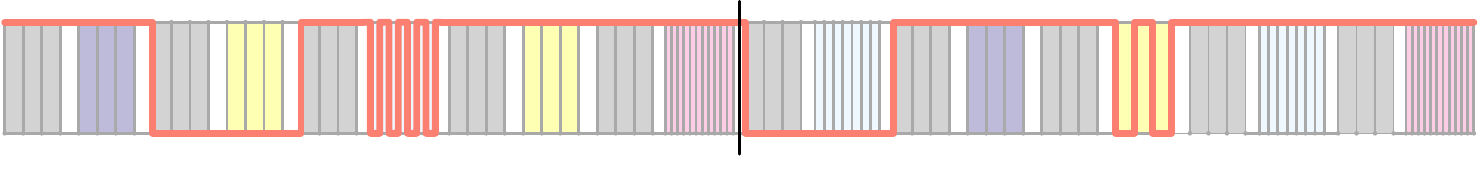}
        }
        \caption{Constructing the anagramish path $P$:
            (1) top and bottom paths are matched with zig-zag paths;
            (2) all remaining colourful blocks receive top paths;
            (3) all boring blocks receive top, updown, or downup paths.
        }
        \label{path_construction}
    \end{figure}
    \begin{compactenum}
        \item For each $a\in\Sigma$ such that $\delta_a>0$, group the elements of $A_a$ into pairs.  For each pair $(i,j)$, $P$ contains the path through the top row of $H_i$ and the path through the bottom row of $H_j$.  For each element $i\in B_a$, $P$ contains the zig-zag path with both endpoints in the top row of $H_i$ and that contains every vertex of $H_i$.  (Note that the zig-zag path begins at the top and ends at the bottom row because $H_i$ is a $t$-block for $t$ a multiple of $4$; in particular, $t$ is even.)

        \begin{figure}
            \centering{
                \begin{tabular}{ccc}
                    \includegraphics{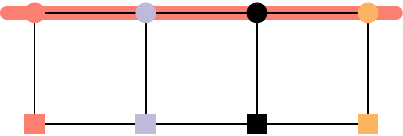} &
                    \includegraphics{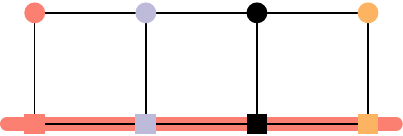} &
                    \includegraphics{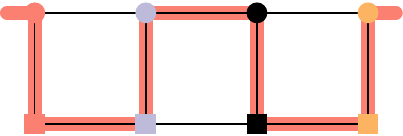}
                \end{tabular}
            }
            \caption{Subpaths of $P$ through colourful blocks: A top path and a bottom path contribute the same amount as a single zig-zag path.}
        \end{figure}

        \item For each $a\in\Sigma$ such that $\delta_a<0$ we proceed symmetrically to the previous case, but reversing the roles of $A_a$ and $B_a$.  Specifically, we group the elements of $B_a$ into pairs.  For each pair $(i,j)$, $P$ contains the path through the top row of $H_i$ and the path through the bottom row of $B_j$.  For each element $i\in A_a$, $P$ contains the zig-zag path with both endpoints in the top row of $H_i$ and that contains every vertex of $H_i$.

        \item For each $i\in\{1,\ldots,2r\}\setminus\bigcup_{a\in\Sigma}(I_a\cup B_a)$, $P$ contains the top row of $H_i$.
    \end{compactenum}
    The rules above define the intersection, $P_i$, of $P$ with each colourful block $H_i$ of $G_{n_s}$.  If $P_i$ is the path through the bottom (top) row of $H_i$ then we call $H_i$ a \emph{bottom (top) block}.  If $P_i$ is the zig-zag path that contains every vertex of $H_i$ then we call $H_i$ a \emph{zig-zag block}.  Note that $\sum_{a\in\Sigma} \delta_a = 0$ and this implies that the number of bottom blocks among $H_1,\ldots,H_{r-1}$ is the same as the number of bottom blocks among $H_{r+1},\ldots,H_{2r}$.  Indeed, this number is exactly $\beta:=\tfrac{1}{2}\sum_{a\in\Sigma} |\delta_a|=\tfrac{1}{2}\tau(s)$.

    We now define how $P$ behaves for the boring blocks, that we name $Q_0,\ldots,Q_{2r-1}$. The first boring block $Q_0$ comes immediately before $H_1$. Each boring block $Q_j$, for $j\in\{1,\ldots,2r-1\}$ comes immediately after $H_j$ and immediately before $H_{j+1}$.  In almost every case, $P$ uses the path through the top row of $Q_j$.  The only exceptions are when $H_j$ or $H_{j+1}$ are bottom blocks. Note that, because of the global independence constraint, these two cases are mutually exclusive. See \cref{updownup}.

    \begin{figure}
        \centering{
            \begin{tabular}{ccc}
                \includegraphics{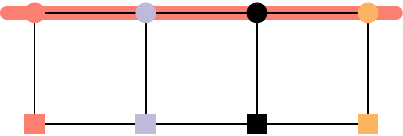} &
                \includegraphics{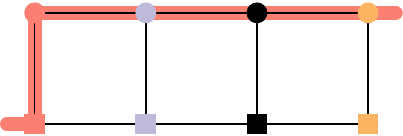} &
                \includegraphics{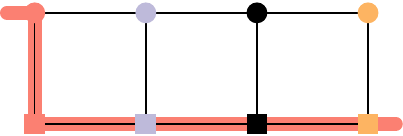}
            \end{tabular}
            \caption{The paths taken by $P$ through boring blocks: A top path, a downup path, and an updown path.}
            \label{updownup}
        }
    \end{figure}

    \begin{compactenum}
        \item When $H_j$ is a bottom block $P$ uses a path that begins at the bottom row of $Q_j$ but moves immediately to the top row of $Q_j$ and uses the entire path along the top row. We call this a \emph{downup} path.
        \item When $H_{j+1}$ is a bottom block, $P$ uses a path that begins at the top row of $Q_j$ and moves immediately to to the bottom row of $Q_j$ ad uses the entire path along the bottom row.  We call this a \emph{updown} path.
    \end{compactenum}
    This completely defines the path $P:=v_1,\ldots,v_{2m}$. All that remains is to argue that $\rho:=\phi_s(v_1),\ldots,\phi_s(v_{2m})$ is anagramish.

    Observe that the number of downup paths and the number of updown paths in $Q_0,\ldots,Q_{r-1}$ is exactly the same as the number of bottom blocks among $H_1,\ldots,H_{r-1}$ which is exactly $\beta$.  Similarly, the number of updown paths and downup paths in $Q_{r+1},\ldots,Q_{2r-1}$ is exactly $\beta$.  Now every path that is neither downup nor updown uses the top row.  This implies that the sequence of colours contributed to $\rho$ by  the intersection of $P$ with $Q_0,\ldots,Q_{r-1}$ is a permutation of the sequence of colours contributed to $\rho$ by the intersection of $P$ with $Q_{r+1},\ldots,Q_{2r-1}$.

    Finally, by construction, each pair of top and bottom blocks in $H_1,\ldots,H_{r-1}$ contributes exactly the same amount as a single matching zig-zag block in $H_{r+1},\ldots,H_{2r-1}$.  Specifically, if $x,y\in A_a$, and $z\in B_a$, $H_x$ is a top block, $H_y$ is a bottom block and $H_z$ is a zig-zag block, then the contributions of $P_x$ and $P_y$ to $\rho$ cancels out the contribution of $P_z$. After doing this cancellation exhaustively, all that remains are top blocks, which also cancel each other perfectly.  This completes the proof.
\end{proof}

For completeness, we wrap up the proof of \cref{main_vertex}:

\begin{proof}[Proof of \cref{main_vertex}]
    Assume for the sake of contradiction that there exists some $c\in\N$ such that $\afcn(G_n)\le c$ for each $n\in\N$.  With this assumption, \cref{near_anagram_graph} shows that for every $r_0\in\N$ there exists a string $s:=s_1,\ldots,s_{2r}\in\Sigma^*$ with $r\ge r_0$, $\tau(s)\le\epsilon r$, and such that $\phi_s$ is an anagram-free colouring $G_{n_s}$.

    \cref{anagramish_path} shows that, for any string $s_1,\ldots,s_{2r}\in\Sigma^*$ with $r\ge r_0$ and $\tau(s)\le\epsilon r$, the graph $G_{n_s}$ contain a path that is anagramish under $\phi_s$.  This is certainly a contradiction to the fact that $\phi_s$ is anagram-free colouring of $G_{n_s}$. We therefore conclude that, for every $c\in\N$ there exists an $n\in N$ such that $\afcn(G_n)> c$.
\end{proof}

%

\section{Proof of \cref{near_anagram_fourier}}
\label{near_anagram_proof}


All that remains is to prove \cref{near_anagram_fourier}, which we do now.

\begin{proof}[Proof of \cref{near_anagram_fourier}]
    Define an even-length string $t$ over the alphabet $\Sigma$ to be \emph{$a$-unbalanced} if $\tau_a(t)>\epsilon|t|/\ell$ and \emph{$a$-balanced} otherwise.  If $t$ is $a$-balanced for each $a\in\Sigma$ then $t$ is \emph{balanced}. Observe that, if $t$ is balanced then $\tau(t)\le \epsilon|t|$. A string is \emph{everywhere unbalanced} if it contains no balanced substring of length $r\ge r_0$. Our goal therefore is to show that there is an upper bound $n:=n(\ell,\epsilon,r_0)$ on the length of any $\ell$-periodic everywhere unbalanced string.

    Let $h$ be a positive integer (that determines $n$ and whose value will be discussed later), and let $n:=r_02^{h}$. Let $s$ be an $\ell$-periodic everywhere unbalanced string of length $n$ over the alphabet $\Sigma$.  The fact that $s$ is $\ell$-periodic, implies that the $|\Sigma|\le\ell$.  Assume, without loss of generality, that $r_0$ is a multiple of $\ell$.

    Consider the complete binary tree $T$ of height $h$ whose leaves, in order, are length-$r_0$ strings whose concatenation is $s$ and for which each internal node is the substring obtained by concatenating the node's left and right child.
    Note that for each $v\in V(T)$ and each $a\in\Sigma$, the fact that $s$ is $\ell$-periodic and $r_0$ is multiple of $\ell$ implies that $\hist_a(v)\ge |v|/\ell$.




  For each $a\in\Sigma$, let $S_a:=\{v\in V(T): \text{$v$ is $a$-unbalanced}\}$. Since $s$ is everywhere unbalanced, $\bigcup_{a\in\Sigma} S_a=V(T)$. Therefore,
  \[
     (h+1)n = \sum_{v\in V(T)}|v|\le \sum_{a\in\Sigma} \sum_{v\in S_a} |v|
  \]
  and therefore, there exists some $a^*\in\Sigma$ such that $\sum_{v\in S_{a^*}}|v|\ge (h+1)n/|\Sigma| \ge (h+1)n/\ell$.   At this point we are primarily concerned with appearances of $a^*$, so let $X:=S_{a^*}$, and, for each node $v\in V(T)$, let $W(v):=\hist_{a^*}(v)$.

  For each non-leaf node $v$ of $T$, let $R(v)$ denote a child of $v$ such that $W(R(v))\le \tfrac12\cdot W(v)$.  (It is helpful to think of $T$ as being ordered so that each right child $y$ with sibling $x$ has $W(y)\le W(x)$.)  For a non-leaf node $v\in X$ the fact that $v$ is $a^*$-unbalanced implies that
  \[  W(R(v)) \le \tfrac{1}{2}\cdot W(v) - \tfrac{\epsilon}{2\ell}\cdot |v|
      \le (\tfrac12-\tfrac{\epsilon}{2\ell})W(v) \enspace .
  \]

  From this point on we use the following shorthands. For any $S\subseteq V(T)$, $L(S):=\sum_{v\in S}|v|$,
  $W(S):=\sum_{v\in S}W(v)$, and $R(S)=\{R(v):v\in S\}$.  Summarizing, we have a complete binary tree $T$ of height $h$ and
  $X\subseteq V(T)$ with the following properties:
  \begin{compactenum}
    \item For each $v\in V(T)$,  $W(v)\ge |v|/\ell$.
    \item $L(X) \ge (h+1)n/\ell$.
    \item For each non-leaf node $v\in X$,
      $W(R(v)) \le (\tfrac{1}{2}-\tfrac{\epsilon}{2\ell})W(v)$.
  \end{compactenum}
  For each $i\in\{0,\ldots,h\}$, let $X_i\subseteq X$ denote the
  set of nodes $v\in X$ for which the path from the root of $T$ to $v$
  contains exactly $i$ nodes in $X$, excluding $v$.  See \cref{bigtree}. Observe that, since each node in $X_i$ has an ancestor in $X_{i-1}$,
  \[  n \ge L(X_0) \ge L(X_1) \ge \cdots\ge L(X_{h}) \enspace . \]

  We will show that there exists an integer $t:=t(\epsilon,\ell,r_0)$ such that, for each $i\in\{0,\ldots,h-t\}$,
  \begin{equation}
     L(X_{i+t}) \le (1-(1/2)^{t+1}) L(X_i) \enspace . \label{t}
  \end{equation}
  In this way,
  \begin{align*}
     (h+1)n/\ell
        \le L(X) & = \sum_{i=0}^{h} L(X_i) \\
           &\le \sum_{i=0}^{h} L(X_{t\floor{i/t}})
             & \text{(since $t\floor{i/t}\le i$)} \\
           & = t\cdot\sum_{i=0}^{h/t} L(X_{it})
             & \text{(for $h$ a multiple of $t$)} \\
           &\le t\cdot\sum_{i=0}^{\infty} (1-(1/2)^{t+1})^i L(X_0)
           & \text{(by \cref{t})}\\
           &\le tn\cdot \sum_{i=0}^{\infty} (1-(1/2)^{t+1})^i
           & \text{(since $|X_0|\le n$)} \\
           & = tn2^{t+1} \\
  \end{align*}
  which is a contradiction for sufficiently large $h$; in particular, for $h > \ell t2^{t+1}-1$.

  It remains to establish \cref{t}, which we do now.  Define $A_0:= X_i$ and, for each $j\ge 1$, define $A_j$ to be the subset of $X_{i+j}$ that are descendants of some node in $R(A_{j-1})$.  See \cref{bigtree}.  To upper bound $L(X_{i+t})$ observe that $X_{i+t}$ can be split into two sets $A_0'$ and $B$ defined as follows:  The nodes $A_0'$ do not have an ancestor in $R(A_0)$ and therefore $L(A_0')\le (1/2)L(A_0)$. The nodes in $B$ do have an ancestor in $R(A_0)$ and therefore have an ancestor in $A_1$.  Iterating this argument, we obtain
 \begin{align*}
      L(X_{i+t})
         &\le (1/2)L(A_0) + (1/2)L(A_1) + \cdots + (1/2)L(A_{t-1}) + L(A_t) \\
         &\le (1/2)L(A_0) + (1/4)L(A_0) + \cdots + (1/2)^t L(A_{0}) + L(A_t) \\
         &  = (1-(1/2)^t)L(A_0) + L(A_t) = (1-(1/2)^t)L(X_i) + L(A_t)  \enspace .
   \end{align*}
  So all that remains to establish \ref{t} is to prove that
  $L(A_t)\le (1/2)^{t+1}L(X_i)$.

  \begin{figure}
    \centering{
       \includegraphics[width=\textwidth]{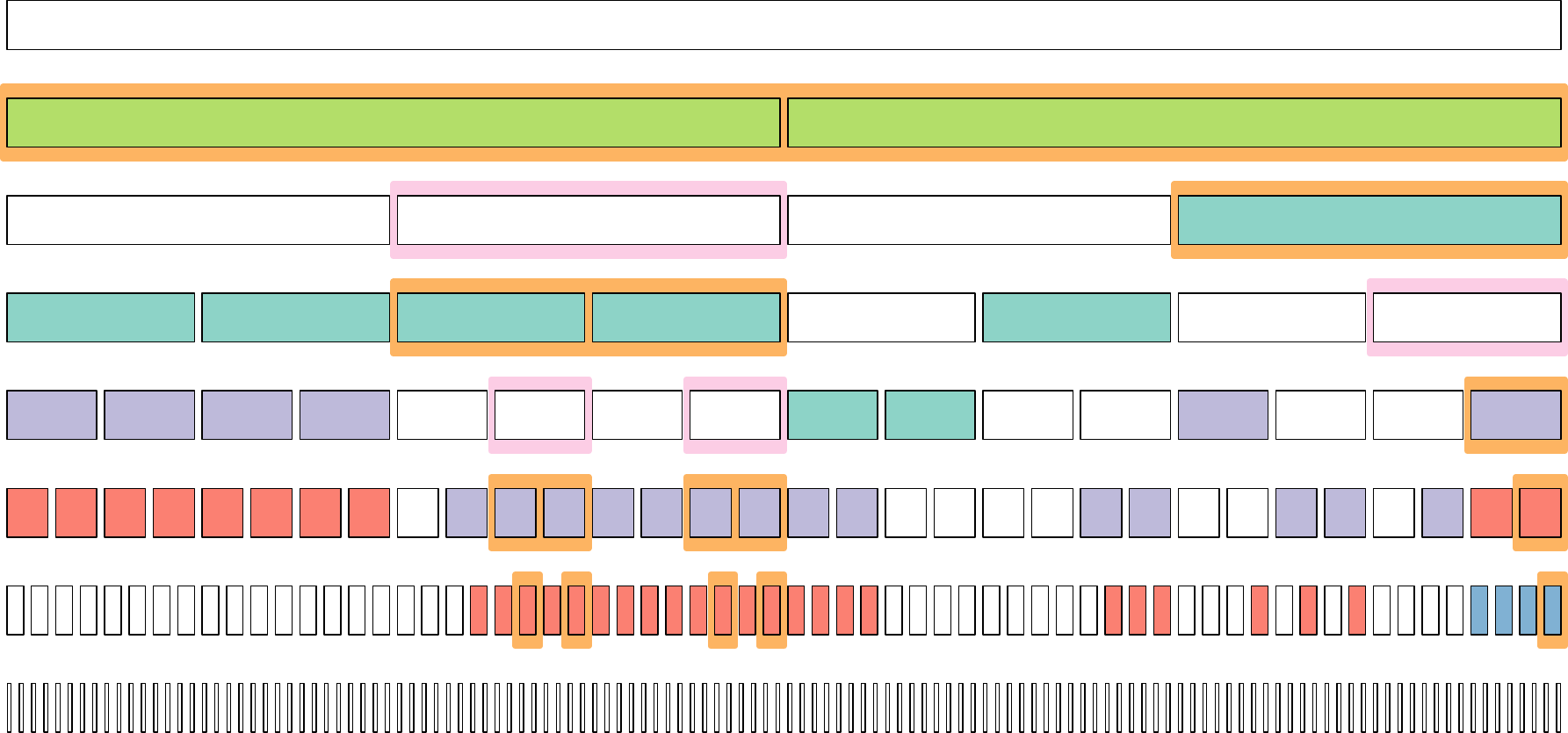}
    }
    \caption{The partitioning of $X$ into $X_0,\ldots,X_h$. Shaded
    nodes are in $X$ and all nodes in $X_i$ are shaded with the same
    colour.   Starting with $A_0=X_0$, the elements of $A_0,\ldots,A_h$
    are highlighted (in orange).  The elements of $R(A_0),\ldots,R(A_h)$ are also highlighted (in pink).}
    \label{bigtree}
  \end{figure}

  To to this, observe that, for each $j\in\{1,\ldots,t\}$,
  \begin{equation}
      W(A_j)
        \le W(R(A_{j-1}))
        \le (\tfrac12-\tfrac{\epsilon}{2\ell})\cdot W(A_{j-1})
  \end{equation}
  which implies
  \[
       W(A_t)
       \le (\tfrac12-\tfrac{\epsilon}{2\ell})^t W(A_0) \le (\tfrac12-\tfrac{\epsilon}{2\ell})^t\cdot L(A_0)
       =  (\tfrac12-\tfrac{\epsilon}{2\ell})^t\cdot L(X_i)
  \]
  Since $s$ is $\ell$-periodic,
  \[
        L(A_t)\le \ell\cdot W(A_t) \le \ell\cdot(\tfrac12-\tfrac{\epsilon}{2\ell})^t\cdot L(X_i) \le L(X_i)/2^{t+1}
  \]
  for $t = \ceil{\log(2\ell)/\log((1/(1-\tfrac{\epsilon}{\ell}))}$.
\end{proof}

\section{Reflections}
\label{reflections}

Although an explicit upper bound on $n:=n(\epsilon,\ell,r_0)$ could be extracted from the proof of \cref{near_anagram_fourier} it would likely be far from tight.  We suspect that there is a Fourier analytic proof that would give better quantitative bounds.  We have not pursued this, because we have no idea how to explicitly upper bound $\ell$, for reasons discussed in the next paragraph.

\cref{periodicity} and its proof give absolutely no clues to help find a concrete bound on $\ell$ or to find a minimal set $\Xi$. Indeed, for some choices of $P$, doing so can be a difficult problem.  Consider the example where $|\Sigma|=5$ and $P$ is predicate that tells whether or not its input is anagram-free. It is easy to see that this predicate $P$ satisfies (A1) and the result of \citet{pleasants:non-repetitive}, published in 1970, shows that this $P$ satisfies (A2).  The question of whether $|\Xi|=4$ or $|\Xi|=5$ is then the question of determining whether there exist arbitrarily long anagram-free strings on an alphabet of size $4$.  This was the open problem posed by \citet{erdos:some} in 1961 and again by \citet{brown:is} in 1971 and not resolved until 1992 when \citet{keranen:abelian,keranen:powerful} showed that the answer, in this case, is that $|\Xi|=4$.  However, if this were not the case, then determining $\ell$ would be the question of determining the length of the longest anagram-free string over an alphabet of size $4$.

Our proof uses \cref{periodicity} twice and each application uses a predicate $P$ that is considerably more complicated than asking if the input string is anagram-free. It seems unlikely that we will obtain concrete bounds upper bounds on $\ell$ as a function of $c$ except, possibly, through the use of computer search. The resulting value $\ell$ is used in the application of \cref{near_anagram_fourier} and also within the proof of \cref{anagramish_path}.

\bibliographystyle{plainurlnat}
\bibliography{af2t}

\end{document}